\documentclass[11pt]{article}

\usepackage{amssymb,amsmath,amsfonts,amsthm}
\usepackage{latexsym}
\usepackage{graphics}
\usepackage{indentfirst}

\setlength{\textwidth}{15.5cm} \setlength{\headheight}{0.5cm} \setlength{\textheight}{21.5cm}
\setlength{\oddsidemargin}{0.25cm} \setlength{\evensidemargin}{0.25cm} \setlength{\topskip}{0.5cm}
\setlength{\footskip}{1.5cm} \setlength{\headsep}{0cm} \setlength{\topmargin}{0.5cm}

\newtheorem*{thm*}{Theorem}
\newtheorem{thm}{Theorem}
\newtheorem{lem}[thm]{Lemma}
\newtheorem{pro}[thm]{Proposition}

\newtheorem{cor}[thm]{Corollary}

\newtheorem{ques}[thm]{Question}

\newcommand{\N}{\mathbb{N}}

\begin{document}

\title{A Simple Characterization of Proportionally 2-choosable Graphs}

\author{Hemanshu Kaul\footnotemark[1], Jeffrey A. Mudrock\footnotemark[1], Michael J. Pelsmajer\footnotemark[1], and Benjamin Reiniger\footnotemark[1]}

\footnotetext[1]{Department of Applied Mathematics, Illinois Institute of Technology, Chicago, IL 60616.  E-mail:  {\tt {kaul@iit.edu, jmudrock@hawk.iit.edu, pelsmajer@iit.edu, breiniger@iit.edu.}}}

\date{2018}

\maketitle

\begin{abstract}

We recently introduced proportional choosability, a new list analogue of equitable coloring. Like equitable coloring, and unlike list equitable coloring (a.k.a. equitable choosability), proportional choosability bounds sizes of color classes both from above and from below.  In this note, we show that a graph is proportionally 2-choosable if and only if it is a linear forest such that its largest component has at most~5 vertices and all of its other components have two or fewer vertices.  We also construct examples that show that characterizing equitably 2-choosable graphs is still open.

\medskip

\noindent {\bf Keywords.} graph coloring, equitable coloring, list coloring, equitable choosability.

\noindent \textbf{Mathematics Subject Classification.} 05C15.

\end{abstract}

\section{Introduction}\label{intro}

All graphs in this note are assumed to be finite, simple graphs unless otherwise noted.  Generally speaking we follow West~\cite{W01} for basic terminology and notation.

\subsection{Equitable Coloring and List Coloring}

The study of equitable coloring began with a conjecture of Erd\H{o}s~\cite{E64} in~1964, but the general concept was formally introduced by Meyer~\cite{M73} in~1973.  An \emph{equitable $k$-coloring} of a graph $G$ is a proper $k$-coloring of $G$, $f$, such that the sizes of the color classes differ by at most one (where a $k$-coloring has exactly $k$, possibly empty, color classes).  It is easy to see that for an equitable $k$-coloring, the color classes associated with the coloring are each of size $\lceil |V(G)|/k \rceil$ or $\lfloor |V(G)|/k \rfloor$.  We say that a graph $G$ is \emph{equitably $k$-colorable} if there exists an equitable $k$-coloring of $G$. Equitable colorings are useful when it is preferable to form a proper coloring without under-using any colors or using any color excessively often. Equitable coloring has found many applications, see for example~\cite{T73}, \cite{P01}, \cite{KJ06}, and~\cite{JR02}.

List coloring was introduced independently by Vizing~\cite{V76} and Erd\H{o}s, Rubin, and Taylor~\cite{ET79} in the 1970's.  A list assignment $L$ for a graph $G$ is a function which associates with each vertex of $G$ a list of colors.  When $|L(v)|=k$ for all $v \in V(G)$ we say that $L$ is a \emph{k-assignment} for $G$.  We say $G$ is \emph{$L$-colorable} if there exists a proper coloring $f$ of $G$ such that $f(v) \in L(v)$ for each $v \in V(G)$ (and $f$ is called a \emph{proper $L$-coloring} of $G$).  We say $G$ is \emph{$k$-choosable} if a proper $L$-coloring of $G$ exists whenever $L$ is a $k$-assignment for $G$.
List coloring and equitable coloring have each been widely studied in the decades since they were introduced.

In 2003, Kostochka, West, and the third author introduced a list analogue of equitable coloring~\cite{KP03}, {\it equitable choosability}, which has received some attention as well (see~\cite{KK13,LB09,ZW11,ZB08,ZB10,ZB15}).  Suppose that $L$ is a $k$-assignment for the graph $G$.  A proper $L$-coloring of $G$ is \emph{equitable} if each color appears on at most $\lceil |V(G)|/k \rceil$ vertices.  Such a coloring is called an \emph{equitable $L$-coloring} of $G$, and we call $G$ \emph{equitably $L$-colorable} when an equitable $L$-coloring of $G$ exists.  We say $G$ is \emph{equitably $k$-choosable} if $G$ is equitably $L$-colorable whenever $L$ is a $k$-assignment for $G$.  Note that $\lceil |V(G)|/k \rceil$ is the same upper bound on the size of color classes in typical equitable coloring.  So, for the notion of equitable choosability, no color is used excessively often.

We recently introduced a new list analogue of equitable coloring, called proportional choosability, which also bounds the sizes of the color classes from below.  Note that there is no lower bound on the number of lists that contain a color: a color might appear in exactly one list, for example.  Thus, one cannot hope to have a lower bound like $\lfloor |V(G)|/k \rfloor$ for color class sizes.  Instead, we used bounds that vary with the availability of each color, as follows.  (See~\cite{KM18} for further discussion.)

\subsection{Proportional Choosability}

Suppose that $L$ is a $k$-assignment for graph $G$.  The \emph{palette of colors associated with $L$} is $\cup_{v \in V(G)} L(v)$.  For each color $c$, the \emph{multiplicity of $c$ in $L$} is the number of lists of $L$ in which $c$ appears.  The multiplicity of $c$ in $L$ is denoted by
$\eta_L(c)$ (or simply $\eta(c)$ when the list assignment is clear), so $\eta_L(c)=\left\lvert{\{v : v \in V(G), c \in L(v) \}}\right\rvert$.  Throughout this note, when $L$ is a list assignment for some graph $G$, we always use $\mathcal{L}$ to denote the palette of colors associated with $L$.

Given a $k$-assignment $L$ for a graph $G$, a proper $L$-coloring, $f$, of $G$ is a \emph{proportional $L$-coloring} of $G$ if for each $c \in \mathcal{L}$, $f^{-1}(c)$, the color class of $c$, is of size
$ \left \lfloor {\eta(c)/k} \right \rfloor$ or $\left \lceil {\eta(c)/k} \right \rceil$.
We say that graph $G$ is \emph{proportionally $L$-colorable} if a proportional $L$-coloring of $G$ exists, and $G$ is \emph{proportionally $k$-choosable} if $G$ is proportionally $L$-colorable whenever $L$ is a $k$-assignment for $G$.

In the rest of this section, we state some important basic properties of proportional $k$-choosability.  Proofs and further discussion of these properties can be found in~\cite{KM18} and~\cite{M18}.

For the first property, note that if a graph is $k$-choosable, then it is $k$-colorable.  On the other hand, it can happen that a graph is equitably $k$-choosable, but not equitably $k$-colorable. 

\begin{pro}[\cite{KM18}] \label{pro: motivation}
If $G$ is proportionally $k$-choosable, then $G$ is both equitably $k$-choosable and equitably $k$-colorable.
\end{pro}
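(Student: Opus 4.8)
The plan is to prove the two conclusions separately, handling equitable $k$-choosability first and ordinary equitable $k$-colorability second. For the first, I would take an arbitrary $k$-assignment $L$ and use the hypothesis to extract a proportional $L$-coloring $f$; the goal is then to verify that $f$ is automatically equitable. The crucial observation is that for every color $c \in \mathcal{L}$ the multiplicity satisfies $\eta(c) \le |V(G)|$, since $\eta(c)$ counts the lists (hence vertices) containing $c$ and there are exactly $|V(G)|$ lists. Because $f$ is proportional, $|f^{-1}(c)| \le \lceil \eta(c)/k \rceil \le \lceil |V(G)|/k \rceil$, which is precisely the bound required for an equitable $L$-coloring. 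As $L$ was arbitrary, $G$ is equitably $k$-choosable.

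For the second conclusion I would invoke proportional choosability on one carefully chosen list assignment rather than on all of them. Specifically, I would set $L(v) = \{1, 2, \ldots, k\}$ for every vertex $v$, which is clearly a $k$-assignment with palette $\mathcal{L} = \{1, \ldots, k\}$. Under this $L$, every color appears in every list, so $\eta(c) = |V(G)|$ for each $c \in \mathcal{L}$. Consequently any proportional $L$-coloring $f$ forces each color class $f^{-1}(c)$ to have size $\lfloor |V(G)|/k \rfloor$ or $\lceil |V(G)|/k \rceil$. Since $f$ is a proper coloring using only the $k$ colors $1, \ldots, k$, and its $k$ color classes all have one of these two sizes (which differ by at most one), $f$ is by definition an equitable $k$-coloring. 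Proportional choosability guarantees that such an $f$ exists, completing the argument.

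In terms of difficulty, neither step presents a serious obstacle: both reduce to matching the floor/ceiling bounds in the definitions once the right inequality ($\eta(c) \le |V(G)|$) or the right list assignment (the constant assignment $\{1,\ldots,k\}$) is identified. If anything, the only point demanding care is confirming that the constant-assignment trick genuinely recovers the classical notion of equitable coloring --- that is, checking that ``$k$ color classes, each of size $\lfloor |V(G)|/k \rfloor$ or $\lceil |V(G)|/k \rceil$'' coincides exactly with ``color classes differing by at most one,'' and noting that a proper $L$-coloring here is nothing more than a proper $k$-coloring with colors drawn from $\{1,\ldots,k\}$. Once these identifications are made explicit, the proposition follows directly from the definitions.
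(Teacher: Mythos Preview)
Your argument is correct. The paper does not actually give a proof of this proposition; it is stated with a citation to \cite{KM18} and the surrounding text explicitly says ``Proofs and further discussion of these properties can be found in~\cite{KM18} and~\cite{M18}.'' So there is no in-paper proof to compare against.

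That said, what you have written is exactly the natural (and presumably the intended) proof. The upper bound $|f^{-1}(c)|\le\lceil \eta(c)/k\rceil\le\lceil |V(G)|/k\rceil$ handles equitable $k$-choosability immediately, and specializing to the constant assignment $L(v)=\{1,\ldots,k\}$ forces $\eta(c)=|V(G)|$ for every $c$, so a proportional $L$-coloring is literally an equitable $k$-coloring. Your closing check---that ``each class has size $\lfloor |V(G)|/k\rfloor$ or $\lceil |V(G)|/k\rceil$'' matches the paper's definition of an equitable $k$-coloring (class sizes differing by at most one, with empty classes allowed)---is the only point requiring a moment's thought, and you have handled it.
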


The next two properties state that proportional $k$-choosability is {\it monotone in $k$} and it is also a {\it monotone graph property}.
Note that the analogous statements for $k$-colorability and $k$-choosability are easily verified, while the analogous statements for equitable $k$-colorability and equitable $k$-choosability are not true.

\begin{pro}[\cite{KM18}] \label{pro: monoink}
If $G$ is proportionally $k$-choosable, then $G$ is proportionally $(k+1)$-choosable.
\end{pro}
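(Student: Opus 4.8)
The plan is to reduce an arbitrary $(k+1)$-assignment on $G$ to a $k$-assignment on the \emph{same} graph $G$, so that a proportional coloring for the smaller assignment is automatically proportional for the larger one. Let $L'$ be a $(k+1)$-assignment for $G$ with palette $\mathcal{L}'$ and multiplicities $\eta' = \eta_{L'}$, and set $n = |V(G)|$. The idea is to choose, for each vertex $v$, one color $d(v) \in L'(v)$ to discard, and then let $L(v) = L'(v) \setminus \{d(v)\}$, which is a $k$-assignment. Writing $r_c = |\{v : d(v) = c\}|$ for the number of lists from which $c$ is removed, the new multiplicity is $\eta_L(c) = \eta'(c) - r_c$. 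I would insist that the discard function $d$ be \emph{balanced}, meaning $r_c \in \{\floor{\eta'(c)/(k+1)}, \ceil{\eta'(c)/(k+1)}\}$ for every color $c$.

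The key step, and the one I expect to be the main obstacle, is proving that a balanced discard function exists. I would model this as an integral flow (transportation) problem on the bipartite membership structure: each vertex must select exactly one incident color, and each color $c$ should be selected between $\floor{\eta'(c)/(k+1)}$ and $\ceil{\eta'(c)/(k+1)}$ times. The fractional assignment giving weight $1/(k+1)$ to each pair $(v,c)$ with $c \in L'(v)$ satisfies the row constraints exactly (each vertex has total weight $1$) and meets every relaxed column target, since its column sum is precisely $\eta'(c)/(k+1)$, which lies between its floor and ceiling; moreover $\sum_c \eta'(c)/(k+1) = n$, so the targets are globally consistent. The relevant polytope (row sums equal to $1$, column sums between $\floor{\eta'(c)/(k+1)}$ and $\ceil{\eta'(c)/(k+1)}$, variables nonnegative) is governed by the incidence matrix of a bipartite graph, which is totally unimodular, and its right-hand sides are integral; hence the polytope is integral and, being nonempty, contains an integral point, which is the desired balanced $d$. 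Equivalently, one may phrase this as the feasibility of a flow with integral lower and upper bounds and invoke the integrality theorem for maximum flows.

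Granting a balanced $d$, the remainder is a short calculation. Since $G$ is proportionally $k$-choosable, the $k$-assignment $L$ admits a proportional $L$-coloring $f$; because $L(v) \subseteq L'(v)$ and $f$ is proper, $f$ is a proper $L'$-coloring. It remains to check that each class size $|f^{-1}(c)| \in \{\floor{\eta_L(c)/k}, \ceil{\eta_L(c)/k}\}$ also lies in $\{\floor{\eta'(c)/(k+1)}, \ceil{\eta'(c)/(k+1)}\}$. Writing $\eta'(c) = q(k+1) + s$ with $0 \le s \le k$ and substituting $\eta_L(c) = \eta'(c) - r_c$ for the admissible values of $r_c$ (namely $r_c = q$ when $s = 0$, and $r_c \in \{q, q+1\}$ when $s \ge 1$), a direct computation shows that both $\floor{\eta_L(c)/k}$ and $\ceil{\eta_L(c)/k}$ fall in $\{q, q+1\} = \{\floor{\eta'(c)/(k+1)}, \ceil{\eta'(c)/(k+1)}\}$, collapsing to a single value in the exact-division cases. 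Thus $f$ is a proportional $L'$-coloring, and since $L'$ was arbitrary, $G$ is proportionally $(k+1)$-choosable. The only genuinely nontrivial ingredient is the existence of the balanced discard function; everything after it is bookkeeping.
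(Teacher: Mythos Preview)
The paper does not actually prove this proposition; it is quoted from~\cite{KM18} (and~\cite{M18}) and only stated here, so there is no in-paper argument to compare against.

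That said, your argument is correct. The reduction from a $(k+1)$-assignment to a $k$-assignment by deleting one color per list in a balanced way is exactly the right idea, and your justification for the existence of a balanced discard function is sound: the fractional point $x_{v,c}=1/(k+1)$ witnesses feasibility, and total unimodularity of the bipartite vertex--color incidence matrix (equivalently, integrality of feasible flows with integral bounds) yields an integral selection. The verification that $\lfloor\eta_L(c)/k\rfloor$ and $\lceil\eta_L(c)/k\rceil$ both land in $\{\lfloor\eta'(c)/(k+1)\rfloor,\lceil\eta'(c)/(k+1)\rceil\}$ via the substitution $\eta'(c)=q(k+1)+s$ is routine and checks in every case (including the boundary cases $s=0$, $s=1$, and $s=k$). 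One cosmetic suggestion: instead of invoking total unimodularity, you can phrase the existence step as a direct application of a rounding lemma for fractional bipartite degree sequences (e.g., the Birkhoff--von~Neumann style rounding or Hall-type arguments), which keeps the proof self-contained and avoids linear-programming language; either packaging is fine.
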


\begin{pro}[\cite{KM18}] \label{lem: monotone}
Suppose $H$ is a subgraph of $G$.  If $G$ is proportionally $k$-choosable, then $H$ is proportionally $k$-choosable.
\end{pro}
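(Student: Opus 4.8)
The plan is to prove the contrapositive-style monotonicity directly: given that $G$ is proportionally $k$-choosable and $H \subseteq G$, I want to show $H$ is proportionally $k$-choosable. So I would start by fixing an arbitrary $k$-assignment $L_H$ for $H$ and aim to extend it to a $k$-assignment $L_G$ for $G$ whose proportional $L_G$-colorings restrict nicely to proportional $L_H$-colorings of $H$. The natural first step is to handle the extra vertices of $G$ not in $H$: for each $v \in V(G) \setminus V(H)$, I would assign $L_G(v)$ to be a set of $k$ \emph{brand-new} colors, all distinct from each other and from every color already appearing in the palette of $L_H$. This guarantees each such private color $c$ has multiplicity $\eta_{L_G}(c) = 1$, so in any proportional $L_G$-coloring its color class must have size $\lfloor 1/k \rfloor = 0$ or $\lceil 1/k \rceil = 1$ — which is automatically satisfiable and forces no interaction with $H$.

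Next I would invoke the hypothesis: since $G$ is proportionally $k$-choosable, there exists a proportional $L_G$-coloring $f$ of $G$. I would then let $g = f|_{V(H)}$ be the restriction of $f$ to $H$, and argue that $g$ is a proportional $L_H$-coloring of $H$. Properness of $g$ is immediate since $g$ inherits it from $f$, and $g(v) \in L_H(v)$ holds because $L_G$ agrees with $L_H$ on $V(H)$. The crux is verifying the proportionality bounds. The key observation is that the palette $\mathcal{L}_H$ consists exactly of the colors of $L_G$ that are \emph{not} the private new colors, since the new colors appear in no list of $H$. For each $c \in \mathcal{L}_H$, I must compare $|g^{-1}(c)|$ against $\lfloor \eta_{L_H}(c)/k \rfloor$ and $\lceil \eta_{L_H}(c)/k \rceil$.

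Here lies the main obstacle, and it is the step I would treat most carefully: the multiplicities $\eta_{L_G}(c)$ and $\eta_{L_H}(c)$ need \emph{not} be equal, because a color $c \in \mathcal{L}_H$ might also appear in the list of some vertex in $V(G) \setminus V(H)$, inflating its $G$-multiplicity. To neutralize this I would strengthen the construction in the first step: rather than merely using fresh colors on $V(G) \setminus V(H)$, I ensure (as already arranged) that those vertices use \emph{only} colors outside $\mathcal{L}_H$, so that no color of $\mathcal{L}_H$ gains multiplicity from outside $H$. With this, $\eta_{L_G}(c) = \eta_{L_H}(c)$ for every $c \in \mathcal{L}_H$, and moreover $f^{-1}(c) = g^{-1}(c)$ since vertices outside $H$ never receive a color of $\mathcal{L}_H$. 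The proportionality of $f$ then transfers verbatim: $|g^{-1}(c)| = |f^{-1}(c)| \in \{\lfloor \eta_{L_G}(c)/k \rfloor, \lceil \eta_{L_G}(c)/k \rceil\} = \{\lfloor \eta_{L_H}(c)/k \rfloor, \lceil \eta_{L_H}(c)/k \rceil\}$. Since $L_H$ was arbitrary, $H$ is proportionally $k$-choosable. The only subtlety worth double-checking is that assigning genuinely distinct fresh colors to every vertex outside $H$ is always possible, which it is, since the color palette is unrestricted and we may draw from an infinite supply.
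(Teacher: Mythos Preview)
Your proof is correct. The paper itself does not include a proof of this proposition; it states the result and defers to the cited references \cite{KM18} and \cite{M18} for the argument. Your approach---extending an arbitrary $k$-assignment $L_H$ on $H$ to a $k$-assignment $L_G$ on $G$ by giving each vertex of $V(G)\setminus V(H)$ its own private set of $k$ fresh colors, then restricting a proportional $L_G$-coloring back to $H$---is the natural one and is almost certainly what appears in those references. The key observation you isolate, that the private colors force $\eta_{L_G}(c)=\eta_{L_H}(c)$ and $f^{-1}(c)\subseteq V(H)$ for every $c\in\mathcal{L}_H$, is exactly what makes the restriction inherit the proportionality bounds verbatim.
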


For any graph $G$, let $\chi_{pc}(G)$ denote smallest $k$ such that $G$ is proportionally $k$-choosable; this is called the \emph{proportional choice number} of $G$.  
By Proposition~\ref{pro: monoink}, a graph $G$ is proportionally $k$-choosable if and only if $k \geq \chi_{pc}(G)$.

Lastly, we present a simple upper bound.

\begin{pro}[\cite{KM18}] \label{pro: orderplus1}
For any graph $G$ other than the complete graph, $\chi_{pc}(G) \leq |V(G)|-1$.
\end{pro}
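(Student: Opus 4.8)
The plan is to let $n=|V(G)|$, fix an arbitrary $(n-1)$-assignment $L$, and first rewrite the proportionality requirement as explicit size bounds. Since there are only $n$ vertices, every multiplicity satisfies $\eta(c)\le n$, so $\lfloor \eta(c)/(n-1)\rfloor\le 1$ and $\lceil \eta(c)/(n-1)\rceil\le 2$; a short computation shows that a proper $L$-coloring $f$ is proportional precisely when no color is used more than twice, a color is used twice only if it has multiplicity $n$, every color of multiplicity $n$ is used at least once, and every color of multiplicity exactly $n-1$ is used exactly once. Write $M=\{c\in\mathcal L:\eta(c)\ge n-1\}$ for the set of colors that must therefore appear. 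I would record two elementary facts: summing multiplicities gives $(n-1)|M|\le \sum_c\eta(c)=n(n-1)$, hence $|M|\le n$ (and $|M|=n$ forces every color of $M$ to have multiplicity exactly $n-1$ and $\mathcal L=M$); and since $|L(v)|=n-1$ for every $v$, we have $|\mathcal L|\ge n-1$, with equality if and only if $L(v)=\mathcal L$ for all $v$. The argument then splits on $|\mathcal L|$.

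\textbf{Main case ($|\mathcal L|\ge n$): a rainbow coloring.} Here I would look for a proper $L$-coloring in which all $n$ colors are distinct; such a coloring is automatically proper, uses each color exactly once, and so meets every upper bound, so the only thing left to arrange is that every color of $M$ is used. This is a bipartite matching problem in the graph $H$ on parts $V(G)$ and $\mathcal L$ with $v\sim c$ iff $c\in L(v)$: I want a matching saturating $V(G)$ that also saturates $M$. Hall's condition gives a matching saturating $V(G)$ (any nonempty $S\subseteq V(G)$ has $|N(S)|\ge n-1\ge|S|$ when $|S|\le n-1$, and $N(S)=\mathcal L$ when $|S|=n$), and it gives a matching saturating $M$ (any nonempty $T\subseteq M$ has $|N(T)|\ge n-1\ge |T|$ when $|T|\le n-1$, while $|T|=n$ only occurs when $\mathcal L=M$, where each vertex misses just one of the $n\ge2$ colors of $T$ and hence lies in $N(T)$). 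Starting from a matching that saturates $M$ and repeatedly augmenting along paths from unsaturated vertices of $V(G)$ raises the matching to size $n$ without ever unsaturating a vertex of $M$; the resulting matching saturates both sides, yielding the desired rainbow proportional coloring.

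\textbf{Degenerate case ($|\mathcal L|=n-1$): doubling a non-edge.} Now every list equals $\mathcal L$ and every color has multiplicity $n$, so $M=\mathcal L$ and I must use all $n-1$ colors with exactly one used twice. This is the only place the hypothesis $G\ne K_n$ enters: choose non-adjacent $u,w$, give them a common color, and color the remaining $n-2$ vertices bijectively with the remaining $n-2$ colors. The color classes are independent (singletons, together with the non-adjacent pair $\{u,w\}$), so the coloring is proper, and every color is used — one twice, the rest once — as required.

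\textbf{Obstacle.} The main obstacle is the simultaneous saturation in the main case: individually a $V(G)$-saturating matching and an $M$-saturating matching exist, but I need one matching doing both at once. The augmenting-path argument above (equivalently, the Mendelsohn--Dulmage theorem) resolves this, and the corner $|M|=n$ relies on the counting fact $\mathcal L=M$ to verify Hall's condition for $M$. Everything else — translating proportionality into the four size conditions and checking the degenerate case — is routine.
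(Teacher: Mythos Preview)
The paper does not actually prove this proposition; it is quoted with a citation to~\cite{KM18}, and the surrounding text explicitly defers all proofs in that subsection to~\cite{KM18} and~\cite{M18}. There is therefore no in-paper argument to compare your approach against.

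That said, your proof is correct. The translation of proportionality into the four explicit size constraints is accurate for $n\ge 3$; when $n=2$ the inequality $\lfloor \eta(c)/(n-1)\rfloor\le 1$ can fail (take $\eta(c)=2$), but the two-vertex edgeless case is trivial and the colorings you construct in both the main and degenerate branches still satisfy proportionality there, so this is cosmetic rather than a gap. In the main case your Hall verifications are right: for $S\subseteq V(G)$ the neighborhood contains any single list, and for $T\subseteq M$ each color already has at least $n-1$ vertices as neighbors, with the extremal subcase $|T|=n$ forcing $\mathcal L=M$ and hence $N(T)=V(G)$. The augmenting-path (Mendelsohn--Dulmage) step then legitimately produces a single matching saturating both $V(G)$ and $M$, and the resulting rainbow $L$-coloring meets every size requirement. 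The degenerate case $|\mathcal L|=n-1$ is precisely where the hypothesis $G\neq K_n$ is used, and doubling a color on a non-adjacent pair is exactly what is needed.
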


\subsection{Results and Questions}
We now summarize our results and mention some open questions.   In Section~\ref{easyresults} we will present some basic results related to proportional choosability which will be used in the proof of our main result.

In Section~\ref{characterize} we prove our main result; that is, we completely characterize graphs that are proportionally 2-choosable.  Such a characterization is natural to seek since there are well known simple characterizations of 2-colorable graphs, equitably 2-colorable graphs, and 2-choosable graphs (see~\cite{ET79}).

Interestingly, finding a nice characterization of equitably 2-choosable graphs is open.  In 2004 Wang and Lih claimed that a connected graph $G$ is equitably 2-choosable if and only if (1) $G$ is 2-choosable and (2) $G$ has a bipartition $X,Y$ such that $||X|-|Y|| \leq 1$ (see~\cite{LW04}).  While the forward direction of their claim is true, the other direction of their claim is not even true for trees, as the following counterexample illustrates.

\vspace{5mm}

\noindent \textbf{Counterexample:} Suppose we form tree $T$ by taking a path on three vertices, $u,v,w$, and add $k$ pendant edges to $u$, $k$ pendant edges to $w$, and $2k-1$ pendant edges to $v$ for some $k \geq 3$.  Then, $T$ is a 2-choosable graph (see the characterization of 2-choosable graphs in~\cite{ET79}) that has a bipartition with partite sets of equal size.  However, if $L$ is the 2-assignment for $T$ that assigns $\{1,2\}$ to $v$ and leaves adjacent to $v$, $\{1,3\}$ to $u$ and $\{2,3\}$ to leaves adjacent to $u$, and $\{2,3\}$ to $w$ and $\{1,3\}$ to leaves adjacent to $w$, then $T$ is not equitably $L$-colorable which implies $T$ is not equitably 2-choosable.

\vspace{5mm}

We prove the following (perhaps surprisingly restrictive) characterization of proportionally 2-choosable graphs in Section~\ref{characterize}.  A \emph{linear forest} is a forest where all the components are paths.

\begin{thm} \label{thm: full2character}
Graph $G$ is proportionally 2-choosable if and only if $G$ is a linear forest such that the largest component of $G$ has at most 5 vertices and all the other components of $G$ have 2 or fewer vertices.
\end{thm}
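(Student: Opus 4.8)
The plan is to prove the two directions separately, in each case invoking Proposition~\ref{lem: monotone} (hereditarity under subgraphs) to reduce to finitely many small graphs. For the forward (necessity) direction I would prove the contrapositive: if $G$ is not a linear forest whose largest component has at most $5$ vertices and whose other components have at most $2$ vertices, then $G$ contains one of $K_{1,3}$, $C_4$, $P_6$, or $P_3\cup P_3$ as a subgraph. Indeed, if some vertex has degree at least $3$ then $K_{1,3}\subseteq G$; otherwise $\Delta(G)\le 2$ and every component is a path or a cycle. By Proposition~\ref{pro: motivation}, $G$ is equitably $2$-colorable and hence $2$-colorable, so $G$ is bipartite and every cycle has even length; the only even cycle on fewer than $6$ vertices is $C_4$, and every cycle on at least $6$ vertices contains $P_6$, so a cyclic $G$ contains $C_4$ or $P_6$. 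Finally, if $G$ is a linear forest with a component on at least $6$ vertices then $P_6\subseteq G$, and if it has two components each on at least $3$ vertices then $P_3\cup P_3\subseteq G$. Thus it suffices to exhibit, for each of these four graphs, a $2$-assignment with no proportional coloring.

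These bad assignments are short. For $K_{1,3}$ give every vertex the list $\{1,2\}$: then $\eta(1)=\eta(2)=4$ force each color class to have size $2$, but every proper coloring places one color on the center and the other on all three leaves, giving class sizes $1$ and $3$. For $C_4=v_1v_2v_3v_4$ set $L(v_1)=L(v_3)=\{1,2\}$ and $L(v_2)=L(v_4)=\{1,3\}$; since $\eta(1)=4$, color $1$ must occupy an independent pair, necessarily $\{v_1,v_3\}$ or $\{v_2,v_4\}$, and either choice forces the remaining two vertices onto a single color of multiplicity $2$, an overflow. For $P_6=v_1\cdots v_6$ set $L(v_1)=L(v_3)=\{1,3\}$, $L(v_2)=L(v_5)=\{1,2\}$, and $L(v_4)=L(v_6)=\{1,4\}$; now $\eta(1)=6$ forces color $1$ onto exactly one of the four independent $3$-sets of $P_6$, and for each of these the remaining three vertices are forced onto their second colors and repeat one of the multiplicity-$2$ colors $2,3,4$, again overflowing. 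For $P_3\cup P_3$ with components $a_1a_2a_3$ and $b_1b_2b_3$ set $L(a_1)=L(a_3)=\{1,2\}$, $L(a_2)=\{1,3\}$, $L(b_1)=L(b_3)=\{1,4\}$, and $L(b_2)=\{1,5\}$: since $\eta(2)=2$, color $2$ must be used exactly once on a leaf of the first component, which forces that component to contain exactly one vertex colored $1$, and likewise $\eta(4)=2$ forces the second component to contain exactly one vertex colored $1$; hence color $1$ is used exactly twice, contradicting $\eta(1)=6$, which demands it be used three times. In all four cases no proportional coloring exists, so by Proposition~\ref{lem: monotone} any $G$ containing one of them fails to be proportionally $2$-choosable.

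For the backward (sufficiency) direction, every graph meeting the structural condition is a subgraph of $P_5\cup tP_2$ for some $t\ge 0$ (the largest component embeds into $P_5$ and each remaining component into a copy of $P_2$), so by Proposition~\ref{lem: monotone} it is enough to show that $P_5\cup tP_2$ is proportionally $2$-choosable for every $t\ge 0$. Given an arbitrary $2$-assignment $L$, I would build a proportional coloring by first recording, for each color $c$, its target window $\{\floor{\eta(c)/2},\ceil{\eta(c)/2}\}$, noting that colors of even multiplicity have a rigid target while colors of odd multiplicity carry one unit of slack. Each $P_2$ admits only a few proper colorings, and the path $P_5$, having odd order and two degree-$1$ endpoints, is flexible; the idea is to use this flexibility to meet every rigid target first and then to absorb the remaining parity slack.

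The main obstacle is precisely this backward family $P_5\cup tP_2$. Because color classes are counted globally over the whole palette, the components are coupled, and the $P_3\cup P_3$ example above shows that a naive component-by-component argument must fail. One has to control all of the color-count windows simultaneously while $t$ is unbounded, so I expect the real work to be an inductive or exchange argument—removing or re-coloring one $P_2$ at a time and repairing the global counts using the endpoints of $P_5$—supported by careful parity bookkeeping that guarantees the rigid even-multiplicity constraints can always be satisfied before the odd-multiplicity slack is exhausted. Isolating a clean invariant that survives the induction on $t$ is, I anticipate, the crux of the entire proof.
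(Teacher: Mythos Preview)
Your forward direction is complete and correct. It is the same strategy as the paper's---reduce via Proposition~\ref{lem: monotone} to a short list of forbidden subgraphs---except that you give explicit bad $2$-assignments for $K_{1,3}$, $C_4$, $P_6$, and $P_3\cup P_3$, whereas the paper quotes earlier propositions (on $K_{1,2k-1}$, on $K_{m,m}$, and on the disjoint union of $k$ copies of $K_{1,k}$). Your four constructions check out.

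The backward direction, however, is not a proof but a hope. Passing to $P_5\cup tP_2$ is a harmless normalization, but it buys nothing: this is still an infinite family with globally coupled color constraints, and your paragraph about ``absorbing parity slack at the endpoints of $P_5$'' and finding ``a clean invariant that survives the induction on~$t$'' names the difficulty rather than dissolving it. In particular, a $P_2$ component whose two lists are disjoint introduces four colors whose multiplicities elsewhere you do not control, so a naive peel-off-one-$P_2$ induction does not close.

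The paper's argument is a minimal-counterexample analysis whose point is to make the $P_2$ components disappear. Three reductions carry the load: (i)~adjacent vertices never have disjoint lists, and a color of multiplicity~$2$ never sits on both ends of an edge (else delete that edge and use minimality); (ii)~a vertex whose two colors are both odd must have two distinct neighbors carrying one each (else delete it and extend); (iii)~one may choose $L$ so that no odd color meets a component in exactly one list. From~(i) and two further one-line merges (equal lists: delete both vertices; overlapping lists: replace the pair by a single vertex), every $P_2$ component is eliminated in a minimal counterexample. The remaining $P_1$ components then have pairwise disjoint lists, and each color on them must recur in $P$ in~$3$ or~$5$ lists; since $|V(P)|\le 5$ this forces at most one isolated vertex, and a bounded case analysis on the multiplicity pattern of $P$ finishes. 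That elimination of $P_2$ components is the ``invariant'' you were looking for; once you have it, there is no induction on~$t$ at all.
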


Notice that Theorem~\ref{thm: full2character} tells us that $\chi_{pc}(P_n) > 2$ whenever $n \geq 6$.  As it turns out, we do not understand the proportional choosability of paths very well.  Specifically, we have the following open questions.  Question~\ref{ques: proportionalpaths2} was asked by Kostochka~\cite{K18} during a talk given by the second author~\cite{M182}.

\begin{ques} \label{ques: proportionalpaths1}
For each $n \geq 6$, what is the value of $\chi_{pc}(P_n)$?
\end{ques}

\begin{ques} \label{ques: proportionalpaths2}
Does there exist a constant $C$ such that $\chi_{pc}(P_n) \leq C$ for every $n \in \N$?
\end{ques}

For proportional choosability it is not enough to consider connected graphs because there are examples where the disjoint union of proportionally $k$-choosable graphs is not itself proportionally $k$-choosable (see Proposition~\ref{pro: maxdegreesec} below).  However, the particular graph structure evident in Theorem~\ref{thm: full2character} leads us to the following question.

\begin{ques} \label{ques: tackiton}
Suppose $G$ is proportionally $k$-choosable.  If $H$ is a graph that is vertex disjoint from $G$ with $|V(H)| \leq k$, must it be the case that the disjoint union of these graphs, $G+H$, is proportionally $k$-choosable?
\end{ques}

By Theorem~\ref{thm: full2character}, we know that the answer to Question~\ref{ques: tackiton} is yes for $k=2$.

\section{Basic Results} \label{easyresults}

In this section we present some results that will be used in Section~\ref{characterize}.  The proofs of the results in this section are presented in both~\cite{KM18} and~\cite{M18}.

Suppose that $L$ is a $k$-assignment for the graph $G$, and suppose that $f$ is a proportional $L$-coloring of $G$.  We call $a \in \mathcal{L}$ a \emph{well distributed color in $L$} (or simply a \emph{well distributed color} when the list assignment is clear) if $\eta(a)$ is divisible by $k$.  A color $p \in \mathcal{L}$ is called \emph{perfectly used with respect to $f$} if $p$ is well distributed and $|f^{-1}(p)|= \eta(p)/k$.  A color $b \in \mathcal{L}$ is called \emph{almost excessive with respect to $f$} if $b$ is not well distributed and $|f^{-1}(b)| = \lceil \eta(b)/k \rceil$, and a color $d \in \mathcal{L}$ is called \emph{almost deficient with respect to $f$} if $d$ is not well distributed and $|f^{-1}(d)| = \lfloor \eta(d)/k \rfloor$.

\begin{lem}[\cite{KM18}] \label{lem: countexcessive}
Suppose that $L$ is a $k$-assignment for the graph $G$, and suppose that $f$ is a proportional $L$-coloring of $G$.
Then the number of almost excessive colors with respect to $f$ is
$$ \frac{1}{k} \sum_{l \in \mathcal{L}} r_l,$$
where $r_l$ is $\eta(l) \bmod k$.
\end{lem}

Propositions~\ref{lem: monotone} and~\ref{pro: starsec} immediately yield Corollary~\ref{cor: maxdegreesec}.

\begin{pro}[\cite{KM18}] \label{pro: starsec}
$K_{1,2k-1}$ is not proportionally $k$-choosable for each $k \in \N$.
\end{pro}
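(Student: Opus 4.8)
The plan is to prove the statement constructively: I will exhibit one specific $k$-assignment $L$ for $K_{1,2k-1}$ and show that it admits no proportional $L$-coloring, which by definition means $K_{1,2k-1}$ is not proportionally $k$-choosable. Let $u$ denote the center of the star and let $v_1, \dots, v_{2k-1}$ denote its leaves, so that $K_{1,2k-1}$ has exactly $2k$ vertices. The assignment I would use is the simplest possible one, namely the constant assignment $L(x) = \{1, 2, \dots, k\}$ for every vertex $x \in V(K_{1,2k-1})$; this is indeed a $k$-assignment since each list has size $k$.

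First I would compute the multiplicities. Since all $2k$ vertices receive the identical list $\{1, \dots, k\}$, every color $c \in \{1, \dots, k\}$ appears in all $2k$ lists, so $\eta_L(c) = 2k$. Consequently each color is well distributed, and in any proportional $L$-coloring $f$ the color class of $c$ must have size exactly $\lfloor 2k/k \rfloor = \lceil 2k/k \rceil = 2$. Thus a proportional $L$-coloring of $K_{1,2k-1}$ would have to use each of the $k$ colors on exactly two vertices, accounting for all $2k$ vertices.

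The contradiction then comes directly from the star structure. In any proper coloring $f$ of $K_{1,2k-1}$, the color $f(u)$ assigned to the center cannot be repeated on any leaf, because every leaf is adjacent to $u$. Hence $f^{-1}(f(u)) = \{u\}$ has size $1 \neq 2$, so $f$ cannot be proportional. Since $f$ was an arbitrary proper $L$-coloring, no proportional $L$-coloring exists.

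I do not anticipate a genuine obstacle here; the only real decision is the choice of list assignment, and the constant assignment is what makes every color well distributed with forced class size $2$, directly clashing with the fact that a star always forces the center's color class down to size $1$. One might initially worry about having to analyze separately each of the $k$ possible colors the center could take, but the adjacency argument dispenses with all cases simultaneously, since it applies to whichever color $f(u)$ happens to be. I would remark that this also makes the conclusion robust, as it holds for every $k \geq 1$.
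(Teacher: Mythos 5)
Your proof is correct and complete: the constant $k$-assignment on the $2k$ vertices of $K_{1,2k-1}$ forces every color class to have size exactly $2$, while the star structure pins the center's color class to size $1$, and the argument covers $k=1$ as well (where no proper $L$-coloring exists at all). The paper itself defers the proof of this proposition to~\cite{KM18}, and your argument is exactly the standard one used there, so there is nothing further to reconcile.
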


\begin{cor}[\cite{KM18}] \label{cor: maxdegreesec}
Suppose that $G$ is a graph with $\Delta(G) \geq 2k-1$ for some $k \in \N$.  Then, $G$ is not proportionally $k$-choosable.  That is,
$$\chi_{pc}(G) > \frac{\Delta(G)+1}{2}$$
for any graph $G$.
\end{cor}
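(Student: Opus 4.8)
The plan is to prove the two assertions separately, reducing both to the behavior of the star $K_{1,2k-1}$ together with the fact (Proposition~\ref{lem: monotone}) that proportional $k$-choosability is inherited by subgraphs. First I would establish the qualitative statement: if $\Delta(G) \geq 2k-1$, then $G$ is not proportionally $k$-choosable. To see this, let $v$ be a vertex of $G$ with $\deg(v) = \Delta(G) \geq 2k-1$, and select any $2k-1$ of its neighbors; the subgraph consisting of $v$ together with the edges joining $v$ to these neighbors is a copy of $K_{1,2k-1}$ inside $G$. By Proposition~\ref{pro: starsec}, $K_{1,2k-1}$ is not proportionally $k$-choosable, so the contrapositive of Proposition~\ref{lem: monotone} forces $G$ to fail to be proportionally $k$-choosable as well.

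For the numerical inequality, I would argue by contradiction using the definition of $\chi_{pc}(G)$ as the smallest $k$ for which $G$ is proportionally $k$-choosable. Set $k = \chi_{pc}(G)$, so that $G$ \emph{is} proportionally $k$-choosable. Suppose toward a contradiction that $\chi_{pc}(G) \leq (\Delta(G)+1)/2$. Rearranging gives $2k - 1 \leq \Delta(G)$, i.e.\ $\Delta(G) \geq 2k-1$, so the first part of the argument applies and shows that $G$ is not proportionally $k$-choosable, contradicting the choice of $k$. Hence $\chi_{pc}(G) > (\Delta(G)+1)/2$, which is precisely the claimed bound. Phrasing it as a contradiction is convenient because it sidesteps any case analysis on the parity of $\Delta(G)$: the single inequality $2k-1 \leq \Delta(G) \iff k \leq (\Delta(G)+1)/2$ captures exactly the range of $k$ for which the star obstruction is present.

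Since the excerpt already signals that the corollary follows immediately from Propositions~\ref{lem: monotone} and~\ref{pro: starsec}, I do not expect a genuine obstacle here; the substantive content is entirely contained in Proposition~\ref{pro: starsec} (that $K_{1,2k-1}$ itself is not proportionally $k$-choosable). The only things requiring care are correctly exhibiting $K_{1,2k-1}$ as a subgraph of any graph with maximum degree at least $2k-1$ and rewriting the threshold $2k-1 \leq \Delta(G)$ in the equivalent form $k \leq (\Delta(G)+1)/2$; both are routine.
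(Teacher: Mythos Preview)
Your argument is correct and matches the paper's approach exactly: the paper simply notes that Propositions~\ref{lem: monotone} and~\ref{pro: starsec} immediately yield the corollary, and your proposal spells out precisely that deduction.
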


By Corollary~\ref{cor: maxdegreesec}, proportionally 2-choosable graphs have maximum degree at most~2.  This fact will be used in Section~\ref{characterize}.  

We will also need the following result on the proportional choosability of the disjoint union of stars.

\begin{pro}[\cite{KM18}] \label{pro: maxdegreesec}
Suppose $H_1, H_2, \ldots, H_k$ are $k$ pairwise vertex disjoint copies of $K_{1,k}$.  If $G = \sum_{i=1}^k H_i$, then $G$ is not proportionally $k$-choosable.
\end{pro}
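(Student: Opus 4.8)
The plan is to exhibit a single $k$-assignment $L$ for $G$ and show directly that $G$ admits no proportional $L$-coloring. Write $c_i$ for the center of $H_i$ and $u_{i,1},\dots,u_{i,k}$ for its leaves. I would use three disjoint families of colors: one \emph{universal} color $0$; a set $T=\{t_1,\dots,t_{k-1}\}$ of \emph{center colors}; and, for each $i$, a set $S_i=\{s_{i,1},\dots,s_{i,k-1}\}$ of \emph{leaf colors} private to $H_i$. Then I set $L(c_i)=\{0\}\cup T$ for every center and $L(u_{i,j})=\{0\}\cup S_i$ for every leaf of $H_i$. Each list has size $k$, so $L$ is a $k$-assignment.

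The engine of the argument is that every color has multiplicity divisible by $k$, so every color class size is forced exactly, leaving no slack for the lower bounds to absorb. Indeed $\eta(0)=k(k+1)$ since color $0$ lies in all lists, while each $t_m$ occurs only in the $k$ center lists and each $s_{i,m}$ only in the $k$ leaf lists of $H_i$, so $\eta(t_m)=\eta(s_{i,m})=k$. Hence in any proportional $L$-coloring, color $0$ is used exactly $k+1$ times, and each of the colors $t_m$, $s_{i,m}$ is used exactly once.

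The contradiction then comes from a pigeonhole squeeze. Since each of the $k-1$ center colors must be used exactly once and a vertex receives a single color, the colors of $T$ occupy $k-1$ distinct centers; as there are $k$ centers, at least one center $c_i$ must receive color $0$. But then all $k$ leaves of $H_i$, being adjacent to $c_i$, must avoid $0$, and their only remaining options lie in $S_i$. Each of the $k-1$ colors in $S_i$ may be used at most once, so at most $k-1$ of the $k$ leaves can be colored — impossible. Thus no proportional $L$-coloring exists, and $G$ is not proportionally $k$-choosable.

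The one genuinely clever step — the part I expect to be the main obstacle — is discovering the assignment itself, and in particular the dual role of the universal color $0$. It is what makes all multiplicities divisible by $k$ (so that every class size is pinned, not merely bounded), and it is simultaneously what couples centers to leaves, so that forcing a center onto $0$ overloads the low-capacity leaf colors. Verifying the construction afterward is routine; for instance one can sanity-check $\sum_c \eta(c)=k|V(G)|=k^2(k+1)$, and the case $k=2$ already displays the whole mechanism on $2P_3$ via the lists $\{0,t_1\}$ on the two centers and $\{0,s_{1,1}\}$, $\{0,s_{2,1}\}$ on the two pairs of leaves.
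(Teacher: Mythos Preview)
Your proof is correct and complete: every color has multiplicity divisible by $k$, so class sizes are forced exactly; the $k-1$ center colors must then occupy $k-1$ distinct centers, leaving one center on color $0$; and that center's $k$ leaves are stranded with only $k-1$ private colors, each usable exactly once.

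However, there is nothing in this paper to compare your argument against. The proposition is stated here with a citation to~\cite{KM18}, and the paper explicitly says that ``the proofs of the results in this section are presented in both~\cite{KM18} and~\cite{M18}.'' So the paper itself contains no proof of this statement.
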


Since Proposition~\ref{pro: orderplus1} implies $K_{1,k}$ is proportionally $k$-choosable, Proposition~\ref{pro: maxdegreesec} shows that adding more components to a graph may make finding a proportional coloring more difficult.  Thus, techniques we use to prove results about proportional choosability of connected graphs may not work in the context of disconnected graphs.  

Finally, we will use the following result to observe that cycles of order~4 are not proportionally 2-choosable.

\begin{pro}[\cite{KM18}] \label{pro: completebipartitesec}
$K_{m,m}$ is not proportionally $m$-choosable for each $m \in \N$.
\end{pro}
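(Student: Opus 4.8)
The plan is to exhibit, for each $m$, an $m$-assignment $L$ for $K_{m,m}$ that admits no proportional $L$-coloring. Write the two parts of $K_{m,m}$ as $A$ and $B$ with $|A|=|B|=m$. The key structural fact I would use repeatedly is that since every vertex of $A$ is adjacent to every vertex of $B$, no color may be used simultaneously on a vertex of $A$ and a vertex of $B$ in a proper coloring; hence in any proper $L$-coloring the set of colors appearing on $A$ and the set appearing on $B$ are disjoint, and each color class lies entirely in $A$ or entirely in $B$. Summing class sizes on each side therefore forces $\sum_{c \text{ used in } A} |f^{-1}(c)| = m$ and likewise for $B$, and the whole argument is a parity obstruction to these two equations.

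Because the rigidity depends on the parity of $m$, I would split into two cases. If $m$ is odd, take $L(v) = \{1, 2, \dots, m\}$ for every vertex $v$. Then every color $c$ has $\eta(c) = 2m$, so $c$ is well distributed and any proportional $L$-coloring must satisfy $|f^{-1}(c)| = 2m/m = 2$. By the structural fact, each such size-$2$ class lies entirely in $A$ or entirely in $B$; if $t$ colors are used in $A$, then $A$ has exactly $2t$ vertices, so $m = 2t$, contradicting that $m$ is odd.

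If $m$ is even, I would instead use $m-1$ common colors together with one color private to each side: set $L(a) = \{1, \dots, m-1, \alpha\}$ for each $a \in A$ and $L(b) = \{1, \dots, m-1, \beta\}$ for each $b \in B$, where $\alpha, \beta$ are new distinct colors (each list has size $m$, so $L$ is an $m$-assignment). Now $\eta(i) = 2m$ for $1 \le i \le m-1$, forcing those classes to have size exactly $2$, while $\eta(\alpha) = \eta(\beta) = m$, forcing $|f^{-1}(\alpha)| = |f^{-1}(\beta)| = 1$. Since $\alpha$ appears only on $A$ and each size-$2$ class sits entirely on one side, counting the vertices of $A$ gives $m = 1 + 2|S_A|$, where $S_A$ is the set of common colors used in $A$; this forces $m$ to be odd, a contradiction.

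In both cases no proportional $L$-coloring exists, so $K_{m,m}$ is not proportionally $m$-choosable. The only real content is choosing the lists so that every color is well distributed (multiplicity a multiple of $m$), which makes all color-class sizes rigid; the main obstacle is precisely that the symmetric assignment $\{1,\dots,m\}$ defeats only odd $m$ (for even $m$ it is properly colorable with each color used twice, $m/2$ colors per side), so a less symmetric assignment with two private colors is needed to break the even case. As a sanity check one can verify $\sum_c \eta(c) = 2m^2$, and note that since all colors are well distributed Lemma~\ref{lem: countexcessive} reports zero almost excessive colors, consistent with the exact class sizes used above.
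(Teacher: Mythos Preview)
Your argument is correct. The key observation---that in $K_{m,m}$ every independent set lies entirely within one part, so each color class sits wholly in $A$ or wholly in $B$---combined with choosing lists so that every color is well distributed (forcing rigid class sizes) yields a clean parity contradiction on $|A|=m$ in both cases. The odd case is immediate, and your asymmetric construction for even $m$ (introducing the private colors $\alpha,\beta$ of multiplicity exactly $m$) correctly forces $|A|=1+2|S_A|$.

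As for comparison: the paper does not actually prove this proposition. It is quoted from~\cite{KM18} (and~\cite{M18}) as a tool in Section~\ref{easyresults}, with the remark that proofs of the results in that section appear in those references. So there is no in-paper proof to compare your approach against; your self-contained argument is a reasonable and efficient way to establish the statement.
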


\section{Characterizing Proportionally 2-Choosable Graphs} \label{characterize}

In this section we will prove Theorem~\ref{thm: full2character}.  Suppose that $G$ is proportionally 2-choosable.  Corollary~\ref{cor: maxdegreesec} implies that $\Delta(G) \leq 2$.  Since $G$ must be 2-colorable, this implies that $G$ is the disjoint union of paths and even cycles.  Proposition~\ref{pro: completebipartitesec} implies that $G$ does not contain a $C_4$.  This means $G$ is the disjoint union of paths and even cycles of order at least 6.  Finally, Proposition~\ref{pro: maxdegreesec} implies that $G$ does not contain a $K_{1,2} + K_{1,2}$.  Since a copy of $K_{1,2} + K_{1,2}$ is a subgraph of every path of order at least 6 and every cycle of order at least 6, Proposition~\ref{lem: monotone} implies that $G$ is a linear forest with largest component of order at most 5 and all other components of order at most 2.

We spend the remainder of this section proving the other direction of Theorem~\ref{thm: full2character}.

For any $2$-assignment $L$ for a graph $G$, we call a color $a \in \mathcal{L}$ \emph{odd} if $\eta(a)$ is odd and \emph{even} if $\eta(a)$ is even.  A proportional $L$-coloring of $G$ requires even colors to be perfectly used while each odd color $c$ must be used $(\eta(c)-1)/2$ or $(\eta(c)+1)/2$ times.  It is easy to see that the number of odd colors in $\mathcal{L}$ is even, and Lemma~\ref{lem: countexcessive} implies that exactly half of the odd colors in $\mathcal{L}$ are used $(\eta(c)+1)/2$ times in a proportional $L$-coloring.

For the sake of contradiction, let $G$ be a minimal counterexample. Then $G$ is not proportionally 2-choosable, and every proper subgraph of $G$ is proportionally 2-choosable.  Proposition~\ref{pro: orderplus1} implies $|V(G)| \geq 4$.  Furthermore, $G$ has one component, $P$, that is a copy of $P_k$ with $k\le 5$, and its other components are copies of $P_1$ or $P_2$.  There is at least one $2$-assignment $L$ for $G$ for which there is no proportional $L$-coloring of $G$.

\begin{lem} \label{lem: disjoint}
(i) If $u,v$ are adjacent vertices, then $L(u)$ and $L(v)$ are not disjoint.
\\
(ii) If $\eta(c)=2$ for some $c\in {\mathcal L}$, then the vertices whose lists contain $c$ are not adjacent.
\end{lem}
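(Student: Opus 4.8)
The plan is to prove both parts by contradiction, exploiting the minimality of $G$ through a single edge-deletion reduction common to both. Suppose $u$ and $v$ are adjacent. I would form $G - uv$, the graph obtained by deleting the edge $uv$; this is a proper subgraph of $G$, so by minimality it is proportionally $2$-choosable and in particular proportionally $L$-colorable. Let $f$ be such a proportional $L$-coloring of $G - uv$. The crucial observation is that deleting an edge changes neither the vertex set nor the lists, so the multiplicity $\eta_L(c)$ of every color $c$ is identical for $G$ and for $G - uv$. Consequently $f$ already has color classes of exactly the sizes demanded by a proportional $L$-coloring of $G$; the only property that can fail is properness, and the only edge at which it can fail is $uv$. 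Thus in each part it suffices to show that $f(u) \ne f(v)$: this promotes $f$ to a proportional $L$-coloring of $G$, contradicting the assumption that $L$ admits no proportional $L$-coloring of $G$.

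For part (i), if $L(u)$ and $L(v)$ are disjoint, then since $f(u) \in L(u)$ and $f(v) \in L(v)$, the two colors are forced to be distinct, giving $f(u) \ne f(v)$ immediately.

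For part (ii), let $c$ be the color with $\eta(c) = 2$, and let $u, v$ be the two vertices whose lists contain it (there are exactly two, since $\eta(c)=2$); assume for contradiction that they are adjacent. Because $\eta(c)=2$ is even, $c$ is an even color and must be used exactly $\eta(c)/2 = 1$ time in the proportional $L$-coloring $f$. The only vertices that can receive $c$ are those whose lists contain it, namely $u$ and $v$. Hence exactly one of $u, v$ is colored $c$ and the other is not, so in particular $f(u) \ne f(v)$, and the reduction above yields the desired contradiction.

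The one point requiring care — and the reason I would phrase the argument via edge deletion rather than vertex deletion — is the need to preserve the proportionality constraints exactly. Deleting a vertex would lower the multiplicities of the colors in its list and could flip their parities, so a proportional coloring of the smaller graph would no longer automatically carry the color-class sizes required for $G$. Edge deletion sidesteps this entirely, leaving each $\eta_L(c)$ unchanged, after which both parts reduce to the short and essentially immediate verification that $f(u) \ne f(v)$.
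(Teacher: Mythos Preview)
Your proof is correct and follows essentially the same approach as the paper: both parts delete the edge $uv$, invoke minimality to obtain a proportional $L$-coloring of $G-uv$, and then argue that $f(u)\ne f(v)$ so the coloring remains proper when $uv$ is restored. Your explicit remark that edge deletion preserves all multiplicities $\eta_L(c)$ (and hence the proportionality constraints) is exactly the point the paper leaves implicit.
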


\begin{proof}
If $L(u)$ and $L(v)$ are disjoint, remove the edge $uv$ and apply induction.
The proportional $L$-coloring obtained still works when you add $uv$ back to the graph.

If $\eta(c)=2$ and $c\in L(u)\cap L(v)$ and $u$ is adjacent to $v$, remove the edge $uv$ and apply induction.
Since $\eta(c)=2$, exactly one of $u,v$ will get color $c$, so the proportional $L$-coloring obtained still
works when you add $uv$ back to the graph.
\end{proof}

\begin{lem} \label{lem: doubleodd}
Suppose that $v\in V(G)$ and $L(v)=\{c_1,c_2\}$ such that both $c_1,c_2$ are odd with respect to $L$.
Then $v$ must have distinct neighbors $x,y$ with $c_1\in L(x)$ and $c_2\in L(y)$.
\end{lem}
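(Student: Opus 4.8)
The plan is to argue by contradiction using the minimality of $G$. Suppose $v$ fails the conclusion, i.e.\ there are no distinct neighbors $x,y$ of $v$ with $c_1\in L(x)$ and $c_2\in L(y)$. The strategy is to delete $v$, color $G-v$ by minimality, and then extend the coloring to $v$ using $c_1$ or $c_2$, thereby producing a proportional $L$-coloring of $G$ and contradicting the choice of $L$.

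First I would pass to $G-v$ with the restricted $2$-assignment $L'=L|_{V(G)\setminus\{v\}}$. Since $G-v$ is a proper subgraph, minimality gives a proportional $L'$-coloring $g$. The crucial observation is that deleting $v$ lowers exactly the multiplicities of $c_1$ and $c_2$ by one each (no other color lies in $L(v)$), so both $c_1$ and $c_2$ become \emph{even} in $L'$. Hence in $g$ the color $c_i$ is used exactly $(\eta(c_i)-1)/2$ times, while every other color keeps its multiplicity and its proportional count. Now I would extend $g$ to $v$ by choosing a color of $\{c_1,c_2\}$ distinct from the colors of its neighbors under $g$: coloring $v$ with $c_i$ raises its count to $(\eta(c_i)+1)/2$, which is admissible because $c_i$ is odd in $L$, so the extended coloring is a proportional $L$-coloring of $G$.

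The only way this extension can fail is if \emph{both} $c_1$ and $c_2$ already appear on $N(v)$ in $g$. Since $\Delta(G)\le 2$ by Corollary~\ref{cor: maxdegreesec}, this forces $v$ to have two distinct neighbors, one colored $c_1$ and the other colored $c_2$; but a neighbor colored $c_i$ must have $c_i$ in its list, producing exactly the pair $x,y$ we assumed not to exist. The main obstacle is this extension step, and the degree bound $\Delta(G)\le 2$ is precisely what makes it clean: at most two neighbors can block the two target colors, and blocking both recreates the forbidden configuration. Degenerate situations are automatically subsumed, since if $v$ has fewer than two neighbors (or if some $\eta(c_i)=1$ so that $c_i$ appears on no neighbor) the two colors cannot both be blocked and the extension always succeeds.
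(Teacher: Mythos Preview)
Your argument is correct and follows essentially the same route as the paper: delete $v$, color $G-v$ by minimality, observe that $c_1,c_2$ become even so each is used exactly $(\eta(c_i)-1)/2$ times, and then extend to $v$ unless both colors already sit on neighbors---which immediately produces the forbidden pair $x,y$.

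Two small remarks. First, your appeal to Corollary~\ref{cor: maxdegreesec} for $\Delta(G)\le 2$ is misplaced: that corollary bounds the maximum degree of proportionally $2$-choosable graphs, but $G$ here is a counterexample and hence \emph{not} proportionally $2$-choosable. The bound $\Delta(G)\le 2$ holds instead because $G$ is, by hypothesis in this direction of the theorem, a linear forest. Second, the degree bound is in fact unnecessary: if both $c_1$ and $c_2$ appear on $N(v)$ under $g$, then some neighbor is colored $c_1$ and some (necessarily different) neighbor is colored $c_2$, regardless of how many neighbors $v$ has; each such neighbor must have the corresponding color in its list, yielding $x,y$ directly. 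The paper's proof proceeds exactly this way, without invoking the degree bound.
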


\begin{proof}
Let $G' = G - \{v \}$ and $L'$ be the 2-assignment for $G'$ by restricting the domain of $L$ to $V(G')$.  By choice of $G$, there is a
proportional $L'$-coloring $f'$ of $G'$.  Since $c_1,c_2$ are odd with respect
to $L$ and both appear in $L(v)$, they are even with respect to $L'$.
Thus, $c_1$ and $c_2$ are used on
exactly $(\eta_L(c_1)-1)/2$ and $(\eta_L(c_2)-1)/2$ vertices by $f'$, respectively.

If $v$ does not have distinct neighbors $x,y$ with $c_1\in L(x)$ and $c_2\in L(y)$,
then both colors do not appear on neighbors of $v$ in $f'$.
Without loss of generality, assume $c_1$ is not used on any neighbor of $v$ by $f'$.
Then color $v$ by $c_1$; this extends $f'$ to a proper $L$-coloring $f$ of $G$.
Since $\eta_L$ and $\eta_{L'}$ are identical except for $c_1$ and $c_2$ and $f$
uses $c_1$ on $(\eta(c_1)+1)/2$ vertices and $c_2$ on $(\eta(c_2)-1)/2$ vertices of $G$,
$f$ is a proportional $L$-coloring of $G$.
\end{proof}

For any subgraph $H$ of $G$, when we say ``a list of $H$'' we mean $L(v)$ for some $v\in V(H)$.  When we say ``the lists of $H$'' we mean every list in the set $\{L(v) : v \in V(H) \}$.
\begin{lem} \label{lem: camouflage}
We can assume that for each odd color $c$ with respect to $L$
and for each component $H$ of $G$,
$c$ is not in exactly one of the lists of $H$.
\end{lem}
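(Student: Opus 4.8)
The plan is to prove the statement by an extremal choice of the bad list assignment rather than by shrinking $G$. Among all $2$-assignments $L$ of $G$ that admit no proportional $L$-coloring, I would fix one for which the number of odd colors is as small as possible. Recall that this number is always even: since $\sum_{c \in \mathcal{L}} \eta(c) = 2|V(G)|$ is even, an even number of the $\eta(c)$ can be odd. In particular, if any odd color exists then there are at least two of them. I claim that a bad assignment $L$ with the fewest odd colors automatically has the camouflage property, which justifies ``we can assume'' it in the sequel.

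Suppose not. Then some odd color $c$ lies in exactly one list of some component $H$, say $L(v) = \{c, c'\}$ with $v \in V(H)$. The crucial structural observation is that \emph{no neighbor of $v$ has $c$ in its list}: every neighbor of $v$ lies in $H$, and $c$ occurs in only the single list $L(v)$ among the lists of $H$. Hence recoloring $v$ with $c$ can never create a conflict. I would next argue that $c'$ must be even. Indeed, if $c'$ were also odd, then Lemma~\ref{lem: doubleodd} would force $v$ to have a neighbor $x$ with $c \in L(x)$, contradicting that $c$ appears in exactly one list of $H$.

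Now I modify $L$ into $L^*$ as follows. Since there are at least two odd colors, choose an odd color $d \neq c$; as $c'$ is even, $d \neq c'$. Let $L^*$ agree with $L$ except that $L^*(v) = \{d, c'\}$. Passing from $L$ to $L^*$ changes the multiplicities only at $c$ (decreased by one) and at $d$ (increased by one), so both become even and $L^*$ has exactly two fewer odd colors than $L$. It then remains to verify that $L^*$ still admits no proportional coloring. Given a proportional $L^*$-coloring $f^*$, note $f^*(v) \in \{c', d\}$. If $f^*(v) = c'$, then $f^*$ is already a proper $L$-coloring, and the parity bookkeeping shows $c$ is used $\lfloor \eta_L(c)/2 \rfloor$ times while $d$ is used $\lceil \eta_L(d)/2 \rceil$ times, so $f^*$ is proportional for $L$. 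If $f^*(v) = d$, recolor $v$ with $c$ (legitimate by the crucial observation); this uses $c$ on $\lceil \eta_L(c)/2 \rceil$ vertices and $d$ on $\lfloor \eta_L(d)/2 \rfloor$ vertices, again proportional for $L$. Either way $G$ would have a proportional $L$-coloring, contradicting that $L$ is bad; hence $L^*$ is itself a bad $2$-assignment with fewer odd colors, contradicting the minimality of $L$.

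The main obstacle is producing a usable replacement color $d$ and guaranteeing that the modified assignment stays bad, and both hinge on the two inputs isolated above. The fact that $c'$ is even (via Lemma~\ref{lem: doubleodd}) is exactly what makes the swap to a \emph{distinct} odd color $d$ possible, while the confinement of $c$ to the single list $L(v)$ within $H$ is what makes the recoloring of $v$ to $c$ proper in the case $f^*(v) = d$. The remaining work is the routine parity computation confirming that every even color stays perfectly used and that the two affected odd colors $c$ and $d$ land inside their respective $\lfloor \cdot \rfloor,\lceil \cdot \rceil$ windows under both transferred colorings.
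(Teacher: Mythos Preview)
Your proof is correct and follows essentially the same approach as the paper's: choose a bad $2$-assignment extremally, observe via Lemma~\ref{lem: doubleodd} that the partner color $c'$ at $v$ must be even, swap $c$ for another odd color $d$ at $v$, and convert any proportional coloring of the modified assignment back to one for $L$. The only cosmetic difference is the minimized quantity---the paper minimizes the number of odd colors that appear in exactly one list of some component, whereas you minimize the total number of odd colors---but either choice drives the same swap argument.
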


\begin{proof}
Over all 2-assignments for $G$ for which there is no proportional coloring, choose $L$ to be one which minimizes the number of such colors.
Assume that there is at least one such color $c_1$: an odd color for which there is a component $H$ of $G$ such that $c_1\in L(v)$ for exactly one vertex $v\in V(H)$.

Let $c_2$ be the other color in $L(v)$.  By Lemma~\ref{lem: doubleodd}, $c_2$ is not odd.
The number of odd colors in $\mathcal L$ is even.  So, there is another color $c_3\in \mathcal{L}$ which is odd.
Let $L'$ be identical to $L$ except that $L'(v)=\{c_2,c_3\}$ instead of $\{c_1,c_2\}$.  Then $c_1$ and $c_3$ are both even with respect to $L'$,
so by the choice of $L$, there must exist a proportional $L'$-coloring $f'$ of $G$.  Both $c_1$ and $c_3$ are perfectly used by $f'$, so $c_1$ is used $(\eta_L(c_1)-1)/2$
times and $c_3$ is used $(\eta_L(c_3)+1)/2$ times by $f'$.  If $f'(v)=c_2$ then $f'$ is a proportional $L$-coloring of $G$.  If
$f'(v)=c_3$ then recoloring $v$ with $c_1$ yields a proportional $L$-coloring of $G$.  This contradicts the choice of $L$.
\end{proof}

Next, suppose that $v_1,v_2$ are the vertices of a $P_2$ component or of two $P_1$ components of $G$.
If $L(v_1)=L(v_2)$, remove those vertices (from $G$ and from $L$) and apply induction, then use
one of the colors on $v_1$ and the other on $v_2$.

If $L(v_1)=\{a,b\}$ and
$L(v_2)=\{b,c\}$, replace those vertices by a new vertex $v_*$ with $L(v_*) =
\{a,c\}$ and apply induction.  Then take the color assigned to $v_*$ and give it to
$v_1$ or $v_2$, and put $b$ on the other vertex.  In both situations we obtained
a proportional $L$-coloring of $G$---a contradiction---so we can assume that $L(v_1)$ and $L(v_2)$ are disjoint.
By Lemma~\ref{lem: disjoint}(i), $G[\{v_1,v_2 \}]$ cannot be a $P_2$ component.  So, $G$ has no $P_2$ components.

Let ${\mathcal L}_1$ be the union of all lists of $P_1$ components.
Each color in ${\mathcal L}_1$ is even by Lemma~\ref{lem: camouflage} and
it is in the list of only one $P_1$ component; so it must be in an odd number of the
lists of $P$.  Furthermore $P$ cannot be a $P_1$ component (nor a $P_2$ component), so $3\le k\le 5$.

Suppose that there is a color $a \in {\mathcal L}_1$ which is in exactly one list of
$P$.  Let $u$ and $v$ be the vertices of the $P_1$ and $P_k$ components, respectively, whose
lists contain $a$.  If $L(u)=L(v)$, remove both $u$ and $v$ and apply induction, then
color $v$ with $a$ and color $u$ with $b$ to get a proportional $L$-coloring of $G$.
Otherwise, we may suppose that
that  $L(u)=\{a,b\}$ and $L(v)=\{a,c\}$.  Remove $u$ and replace $a$ in $L(v)$
by $b$ so that $L(v)=\{b,c\}$, then apply induction.  If $v$ is colored $b$,
then change it to $a$ and color $u$ with $b$.  If $v$ is colored $c$, then
color $u$ with $a$.  Either way, we get a proportional $L$-coloring of $G$.
Thus we can assume that every color in ${\mathcal L}_1$ appears in 3 or 5 lists of $P$.

Any odd color in $\mathcal L$ is in more than one list of $P$ by Lemma~\ref{lem: camouflage}.
Thus, if a color is in an odd number of lists of $P$
(either because it is an odd color or it is in ${\mathcal L}_1$),
it must appear in 3 or 5 lists of $P$.  Since $|V(P)| \leq 5$, there are at most two such colors.
Since the number of odd colors is even and $|\mathcal{L}_1|$ is even, there are either zero or two such colors.
We consider the zero case first.

Suppose every color in $\mathcal{L}$ appears in an even number of lists of $P$.  This implies that every color in $\mathcal{L}$ is even and ${\mathcal L}_1$ is empty.  So, $G=P$.
Two consecutive vertices must share a color $a$ in their lists by Lemma~\ref{lem: disjoint}(i), but $\eta(a)\not=2$ by Lemma~\ref{lem: disjoint}(ii), so $\eta(a)=4$.
If $k=4$ then either the other color of the lists alternate between two colors $b,c$ by Lemma~\ref{lem: disjoint}(ii),
or every list is the same; both cases are easily proportionally $L$-colored.
If $k=5$ then there is one vertex $v$ without $a$ in its list.
By Lemma~\ref{lem: disjoint}, $L(v)$ shares a color $b$ with a neighbor's list and $\eta(b)\not=2$, so $\eta(b)=4$.
There is one more color $c$ which has $\eta(c)=2$; let $u,w$ be the vertices with lists that contain $c$.
Skipping $u$, we can alternate colors $a$ and $b$ along the rest of $P$, since $L(w)$ contains $a$ or $b$.
Then $a$ and $b$ are used twice each; coloring $u$ by $c$ completes a proportional $L$-coloring.

In the remaining cases, there are exactly two colors $a,b$ that appear in an odd number of lists of $P$.
These colors appear in $3$ and $3$, or $3$ and $5$, or $5$ and $5$ lists of $P$.

If $a$ and $b$ each appear in 5 lists of $P$, then
$k=5$, there may be one $P_1$ component or none, and
$|{\mathcal L}|=2$. In both cases, an ordinary 2-coloring is a proportional $L$-coloring.

In the next case, suppose that $a$ and $b$ appear in 5 and 3 lists of $P$, respectively.
Thus, $k=5$, there may be one isolated vertex $v$ with $L(v)=\{a,b\}$ (or
none), and there is a color $c$ of multiplicity $2$ as well.
Let $u,w$ be the vertices with lists that contain $c$.  So, $L(u)=L(w)=\{a,c\}$. Use $c$ on $u$,
then alternate colors $a,b$ along $P_5$ (skipping $u$) so that $w$ gets $a$.
If $v$ exists, then assign it color $a$ and note that $\eta_L(a)=6$ and $\eta_L(b)=4$.
Thus, whether or not $v$ exists, we have obtained a proportional $L$-coloring of $G$.

In the remaining case, $a$ and $b$ each appear in 3 lists of $P$.
There may be an isolated vertex $v$ with $L(v)=\{a,b\}$ (or not).
If $k=3$, color the endpoints of $P$ by $a$ and the other vertex or
vertices by $b$.
If $k=4$ then there is another color $c$ on two lists of $P$.
We can select $c$ from either list, then since the other list contains $a$ or $b$,
alternate $a,b$ along the rest of $P$.
And if there is a $P_1$ we color it so that $a$ and $b$ are each
used twice overall.
Thus, we have obtained a proportional $L$-coloring when $k$ is~3 or~4.
If $k=5$ then there is either one more color that appears on four lists of $P$
or two more that each appear on two lists of $P$.
We consider these cases separately.

First consider the case where $c$ is a color on four lists of~$P$.
Label the vertices of $P$ in order, $v_1,\ldots,v_5$.
There is a unique vertex of $P$ with list $\{a,b\}$.
If $L(v_2)=\{a,b\}$, color $v_1$ and $v_4$ with $c$,
color $v_2$ and $v_3$ with $a$ and $b$ (one each), color
$v_5$ with either $a$ or $b$ and color the $P_1$ vertex with
the other if it exists. The case $L(v_4)=\{a,b\}$ is similar.
Otherwise, we can color $v_2$ and $v_4$ with $c$, color the other vertices
of $P$ by $a$ and $b$ so that one is used twice and the other once,
and that last color can also be used on the $P_1$ if it exists.
In each case, we get a proportional $L$-coloring.

For the other case, let $c,d$ be the colors that each appear on on two lists of~$P$.
By Lemma~\ref{lem: disjoint}(i) and~(ii), there can be no vertex with list $\{c,d\}$.
Then there is a unique vertex $v$ in $P$ with the list $\{a,b\}$.
There cannot be three consecutive vertices with lists containing $c$ or $d$,
since by Lemma~\ref{lem: disjoint}(i) and~(ii), all
three lists would have to share $a$ or share $b$,
but those colors appear only three times on~$P$, including on~$v$.
Hence $v$ is the middle vertex of $P$,
with lists $\{a,c\},\{a,d\}$ on one side and $\{b,c\},\{b,d\}$ on the other.
Select $c$ or $d$ for one neighbor of $v$; then completing a proportional $L$-coloring of $G$
is straightforward.

A proportional $L$-coloring of $G$ gives us a contradiction, so the proof of Theorem~\ref{thm: full2character}
is complete. \qed

\end{document}